\newtheorem{theorem}{Theorem}[section]
\newtheorem{definition}[theorem]{Definition}
\newtheorem{proposition}[theorem]{Proposition}
\newtheorem{lemma}[theorem]{Lemma}
\newtheorem{corollary}[theorem]{Corollary}
\newtheorem{example}[theorem]{Example}
\theoremstyle{remark}
\newtheorem*{remark}{Remark}
\numberwithin{equation}{section}
\newcommand{\Ppp}{\mathbb P}
\newcommand{\NN}{\mathbb N}
\newcommand*{\defeq}{\mathrel{\rlap{%
                     \raisebox{0.3ex}{$\m@th\cdot$}}%
                     \raisebox{-0.3ex}{$\m@th\cdot$}}%
                     =}
\newcommand{\hz}{\widehat{0}}
\DeclareMathOperator{\vdW}{vdW}
\DeclareMathOperator{\gcdtr}{gcdtr}
\DeclareMathOperator{\lcm}{lcm}
\newcommand{\onethingatopanother}[2]{\genfrac{}{}{0pt}{}{#1}{#2}}
\begin{document}

\title{The van der Waerden complex}
\author{Richard Ehrenborg, Likith Govindaiah, \\
Peter S.\ Park and Margaret Readdy}
\date{} 

\maketitle

\begin{abstract}
We introduce the van der Waerden complex $\vdW(n,k)$ 
defined as the simplicial complex whose facets correspond 
to arithmetic progressions
of length $k$ in the vertex set $\{1, 2, \ldots, n \}$. We show 
the van der Waerden complex $\vdW(n,k)$
is homotopy equivalent to
a $CW$-complex whose cells asymptotically
have dimension at most $\log k / \log \log k$.
Furthermore, we give bounds on $n$ and $k$ which imply
that the van der Waerden complex is contractible. 
\end{abstract}

\section{Introduction}

A number of recent papers have considered the topology of
cell complexes
associated to number theoretic concepts.
These include Bj\"orner's study of the Betti
numbers and homotopy type of a simplicial complex 
whose Euler
characteristic is the negative of the Mertens function,
as well as a related $CW$-complex whose
Euler characteristic is the summatory Liouville function~\cite{Bjorner},
Musiker and Reiner's work describing the coefficients of the cyclotomic
polynomial as the torsion homology 
of a sequence of simplicial complexes~\cite{Musiker_Reiner},
and Pakianathan and Winfree's topological reformulations of
number theoretic conjectures
using threshold complexes~\cite{Pakianathan_Winfree}.

Recall that
van der Waerden's 
Theorem from Ramsey theory states that given 
positive integers~$k$ and~$r$, there is
an integer $M = M(k,r)$ so that
when the integers $1$ through $n$ with $n \geq M$
are colored
with $r$ colors, there is a monochromatic
arithmetic progression of length $k$~\cite{van_der_Waerden}.
There is an upper bound for $M(k,r)$ due to Gowers
coming out of his proof of Szemeredi's theorem~\cite{Gowers}.

Motivated by van der Waerden's theorem,
we
define the {\em van der Waerden complex} $\vdW(n,k)$
to be the simplicial complex on the vertex
set $\{1, 2, \ldots, n\}$ whose
facets correspond to all arithmetic progressions of length $k$,
that is, the facets have the form
$\{ x, x+d, x + 2 \cdot d, \ldots, x + k \cdot d\}$,
where $d$ is a positive integer and
$1 \leq x < x + k \cdot d \leq n$.

Observe that the van der Waerden
complex $\vdW(n,k)$
is a simplicial complex of dimension $k$,
that is, each facet has dimension $k$.
Furthermore, when $k=1$,
the complex $\vdW(n,1)$
is the complete graph $K_{n}$,
which is homotopy equivalent to a wedge
of $\binom{n-1}{2}$ circles.

This paper is concerned with understanding the topology
of the van der Waerden complex.
By constructing a discrete Morse matching,
we show that the dimension of the homotopy type
of the van der Waerden
complex is bounded
above by the maximum of the number of distinct primes factors
of all positive integers less than or equal to $k$.
See Theorem~\ref{theorem_main}. 
This bound is asymptotically described as $\log k / \log \log k$.
See Theorem~\ref{theorem_asymptotic}.
In Section~\ref{section_contractible} we
give bounds under which the van der Waerden complex
is contractible. We then look at the implications of our results 
when studying the topology of the family of van der Waerden complexes 
$\vdW(5k,k)$ where $k$ is any positive integer.
We end with open questions in the concluding remarks.

\section{Preliminaries}
\label{section_preliminaries}

Let $\Ppp$ be the set of positive integers.
Let $[n]$ denote the set $\{1,2, \ldots, n\}$
and $[i,j]$ denote the interval $\{i,i+1, \ldots, j\}$.

\begin{definition}
The {\em van der Waerden complex} $\vdW(n,k)$
is the simplicial complex on the vertex
set~$[n]$ whose
facets correspond to all arithmetic progressions of length $k$ in $[n]$,
that is, the facets have the form
$$
     \{ x, x+d, x + 2 \cdot d, \ldots, x + k \cdot d\},  
$$
where $d$ is a positive integer and
$1 \leq x < x + k \cdot d \leq n$.
\end{definition}
We remark that the van der Waerden complex is not
monotone in the variable $k$.
For instance, the set $\{1,4,7\}$ is a facet in
$\vdW(7,2)$, but
is not a face of $\vdW(7,3)$.

Let $P$ be a finite partially ordered set (poset) with 
partial order relation $\prec$. For further information on
posets, see~\cite[Chapter~3]{Stanley_EC_I}.
A \textit{matching} $M$ on $P$ is a collection
of disjoint pairs of elements of $P$
such that $x \prec y$ for each pair
$(x,y) \in M$.
Given a matching $M$, we define the two partial functions
$u$ and $d$ on $P$ as $u(x) = y$ and
$d(y) = x$  when $(x,y) \in M$.
\begin{definition}
A matching $M$ on a poset $P$ is {\em a discrete Morse matching}
if there do not exist elements $x_{1}, x_{2}, \ldots, x_{j}$
such that
$$  x_{1} \prec u(x_{1}) \succ x_{2} \prec u(x_{2}) \succ
\cdots
     \succ x_{j} \prec u(x_{j}) \succ x_{1}  .  $$ 
\end{definition}
One can describe this definition in more concrete terms. 
In the Hasse diagram of the poset~$P$,
direct all of the edges upward except for
those in the matching~$M$.
These are directed downward.   The matching~$M$ is
a discrete Morse matching if and only if the resulting directed graph has
no directed cycles.
Hence a discrete Morse matching is also known as an {\em acyclic} matching.

One way to construct an acyclic matching is by taking the union of 
acyclic matchings on subposets as detailed below. 
For details, see~\cite[Theorem~11.10]{Kozlov_book}.

\begin{theorem}[Patchwork Theorem]
Let $\varphi : P \longrightarrow Q$
be an order-preserving map between the two posets~$P$ and $Q$.
Suppose that a discrete Morse matching exists on each
subposet $\varphi^{-1}(q)$ for every $q \in Q$.
Then the union of these matchings is itself
a discrete Morse matching on $P$.
\label{theorem_patchwork}
\end{theorem}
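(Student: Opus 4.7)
The plan is to verify two things: first, that the union $M = \bigcup_{q \in Q} M_q$ is in fact a matching on $P$, and second, that it is acyclic. The matching property is essentially free: since $\varphi$ is a function, the fibers $\varphi^{-1}(q)$ partition $P$, so the $M_q$ are pairwise disjoint collections of pairs, and each pair $(x,y) \in M_q$ satisfies $x \prec y$ in $P$ because the partial order on $\varphi^{-1}(q)$ is inherited from $P$.

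The heart of the argument is acyclicity. Suppose for contradiction that the directed Hasse diagram associated to $M$ contains a directed cycle
\[
     x_{1} \prec u(x_{1}) \succ x_{2} \prec u(x_{2}) \succ \cdots \succ x_{j} \prec u(x_{j}) \succ x_{1}.
\]
I would then track the cycle under $\varphi$. For each matching edge $(x_i, u(x_i)) \in M$, both endpoints lie in the same fiber, so $\varphi(x_i) = \varphi(u(x_i))$. For each non-matching edge $u(x_i) \succ x_{i+1}$ (indices read cyclically), order-preservation of $\varphi$ gives $\varphi(u(x_i)) \succeq \varphi(x_{i+1})$ in $Q$.

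Chaining these together yields
\[
     \varphi(x_1) = \varphi(u(x_1)) \succeq \varphi(x_2) = \varphi(u(x_2)) \succeq \cdots \succeq \varphi(x_j) \succeq \varphi(x_1),
\]
and since this is a chain of $\succeq$ relations closing up on itself in the poset $Q$, all of the values $\varphi(x_i)$ and $\varphi(u(x_i))$ must coincide with a common element $q \in Q$. In particular, every vertex and every edge of the alleged cycle lies inside the single fiber $\varphi^{-1}(q)$, so the cycle is a directed cycle in the Hasse diagram of $\varphi^{-1}(q)$ with the matching $M_q$. This contradicts the hypothesis that $M_q$ is a discrete Morse matching on $\varphi^{-1}(q)$.

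The only subtle step is justifying that the $\succeq$ comparisons forced by the non-matching edges actually land inside a single fiber; this is where the order-preservation of $\varphi$, together with the fact that matched pairs automatically share a fiber, does the essential work. Everything else is bookkeeping about how directed cycles in the full Hasse diagram localize to fibers.
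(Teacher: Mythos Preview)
Your argument is correct and is essentially the standard proof of the Patchwork Theorem: verify that the union of the fiber matchings is a matching because the fibers partition $P$, then show acyclicity by pushing a hypothetical cycle forward along $\varphi$ and using order-preservation together with the fact that matched pairs share a fiber to force the entire cycle into a single fiber, contradicting acyclicity of the matching there.

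Note, however, that the paper does not supply its own proof of this statement; it quotes the theorem and refers the reader to \cite[Theorem~11.10]{Kozlov_book} for details. Your write-up is the same argument that appears there, so there is nothing to contrast.

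One small point worth tightening: when you conclude that the cycle lives entirely inside $\varphi^{-1}(q)$, you should also observe that each non-matching step $u(x_i) \succ x_{i+1}$, being a cover relation in $P$ between two elements of the fiber, is automatically a cover relation in the induced subposet $\varphi^{-1}(q)$ (there can be no intermediate element in the fiber if there is none in $P$). This ensures the cycle you have found is genuinely a directed cycle in the Hasse diagram of the fiber with matching $M_q$, not merely a chain of comparabilities.
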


An element in the poset $P$ that does not belong
to matching $M$ is called {\em critical}. We
can now state the main theorem of 
discrete Morse theory~\cite[Theorem 2.5]{Forman_2}.

\begin{theorem}[Forman]
\label{theorem_Forman}
Let $\Delta$ be a simplicial complex and let $M$
be a discrete Morse matching on the face poset of
$\Delta - \{\emptyset\}$.
Let $c_{i}$ denote the number of critical elements
of dimension $i$.
Then the simplical complex $\Delta$ is homotopy equivalent
to a $CW$-complex consisting of $c_{i}$ cells of dimension
$i$ for $i \geq 0$.
\end{theorem}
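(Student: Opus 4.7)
The plan is to build $\Delta$ as a $CW$-complex by adding faces in an order dictated by the acyclic matching $M$: critical elements contribute genuine cells, while each matched pair is added together via a null-homotopic attachment.

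First, I would leverage the acyclicity of $M$ to produce an ordering $C_1, C_2, \ldots, C_N$ of the equivalence classes of the face poset modulo the matching, where each $C_i$ is either a single critical element or a matched pair. This ordering should satisfy: whenever $\sigma \prec \tau$ in $P$ and $\sigma, \tau$ lie in distinct classes $C_i, C_j$, we have $i < j$. Such an ordering is a linear extension of the quotient poset $\bar P$, whose existence is equivalent to acyclicity of $M$ --- since a violation of antisymmetry in $\bar P$ unwinds to precisely the forbidden alternating cycle $x_1 \prec u(x_1) \succ x_2 \prec u(x_2) \succ \cdots \succ x_1$. The ordering yields a filtration $\emptyset = \Delta_0 \subset \Delta_1 \subset \cdots \subset \Delta_N = \Delta$ of simplicial subcomplexes, where $\Delta_i = \Delta_{i-1} \cup C_i$.

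Second, I would analyze each step of the filtration. If $C_i = \{\sigma\}$ with $\sigma$ critical of dimension $d$, then every proper face of $\sigma$ lies in a strictly earlier class and therefore in $\Delta_{i-1}$; hence $\Delta_{i-1} \hookrightarrow \Delta_i$ is a genuine $d$-cell attachment, contributing exactly one $d$-cell. If instead $C_i = \{\sigma, \tau\}$ is a matched pair with $\sigma \prec \tau$, the only face of $\Delta_i$ properly containing $\sigma$ is $\tau$ itself: any other such face would live in some class placed before $C_i$, which by downward-closedness would force $\sigma \in \Delta_{i-1}$, contradicting that $\sigma$ first appears at step $i$. Thus $\sigma$ is a free face of $\tau$ in $\Delta_i$, and $\Delta_i$ elementarily collapses onto $\Delta_{i-1}$, preserving homotopy type. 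Iterating, $\Delta$ is homotopy equivalent to a $CW$-complex with exactly $c_i$ cells of dimension $i$ for each $i \geq 0$.

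The main obstacle lies in the first step --- the passage from the combinatorial acyclicity of $M$ to the existence of a compatible linear extension of $\bar P$. Once this ordering is established, the simplicial-topological analysis proceeds almost mechanically via the free-face criterion for elementary collapses and the standard $CW$ attachment formalism.
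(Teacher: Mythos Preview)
The paper does not prove this theorem; it is quoted as the main theorem of discrete Morse theory and cited to Forman's survey~\cite[Theorem~2.5]{Forman_2}. There is therefore no proof in the paper to compare against.

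That said, your outline is essentially the standard argument (the collapse/attachment proof going back to Forman and Chari), and it is correct. Two remarks are worth making. First, your claim that a failure of antisymmetry in $\bar P$ ``unwinds to precisely the forbidden alternating cycle'' is true but not entirely automatic: one must rule out the case where a quotient edge enters a matched pair $\{b,u(b)\}$ at the bottom element $b$ while the next quotient edge leaves from $u(b)$. A short dimension count over the cycle (using that every cover in the face poset raises dimension by exactly one) shows this case cannot occur, so every quotient cycle does indeed lift to an alternating $H_M$-cycle; you may want to say a word about this. Second, the final ``Iterating'' step hides the standard fact that cell attachment is compatible with homotopy equivalence: if $h:X\xrightarrow{\ \simeq\ }Y$ and $f:S^{d-1}\to X$, then $X\cup_f D^d \simeq Y\cup_{h\circ f}D^d$. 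This is what allows you to push each elementary collapse past the subsequent critical-cell attachments and arrive at a $CW$-complex built only from the critical cells. It is routine, but it is the actual content of the last sentence and deserves to be named.
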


A $CW$-complex consisting
of one $0$-dimensional
cell and $c$ $i$-dimensional
cell is homotopy equivalent
to a wedge of $c$
$i$-dimensional spheres.
Combining this observation
with Theorem~\ref{theorem_Forman},
we have the following result. 
\begin{proposition}
Let $\Delta$ be a simplicial complex and let $M$
be a discrete Morse matching on $\Delta - \{\emptyset\}$.
If there only are 
$c$ critical cells of dimension
$i>0$ and one critical
cell of dimension $0$ then $\Delta$ is homotopy equivalent
to a wedge of $c$
spheres each of dimension $i$.
\label{proposition_wedge_of_spheres}
\end{proposition}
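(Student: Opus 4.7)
The proof is essentially an unpacking of Forman's theorem together with the observation that only one attaching map is possible when the target is a point. Here is how I would proceed.

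First, I would invoke Theorem~\ref{theorem_Forman} directly: since $M$ is a discrete Morse matching on the face poset of $\Delta - \{\emptyset\}$ whose only critical cells occur in dimensions $0$ and $i$, the complex $\Delta$ is homotopy equivalent to a $CW$-complex $X$ consisting of a single $0$-cell together with $c$ cells of dimension $i$, and no cells in any other dimension. This step requires nothing beyond citing the theorem.

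Next, I would identify $X$ as a wedge of $c$ copies of $S^{i}$. Since $X$ has no cells in dimensions $1, 2, \ldots, i-1$, its $(i-1)$-skeleton $X^{(i-1)}$ coincides with the single $0$-cell, i.e., a point. Each of the $c$ top-dimensional cells is then attached via a map from $S^{i-1}$ into this one-point space, and such a map is necessarily constant. The mapping cone of a constant map $S^{i-1} \to \{\mathrm{pt}\}$ is, up to homeomorphism, $S^{i}$ with the equator collapsed in a trivial way, namely $S^{i}$ itself (with a distinguished basepoint). Performing this attachment $c$ times in succession — each new $i$-cell is glued at the common basepoint — yields exactly the wedge $\bigvee_{j=1}^{c} S^{i}$.

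The only minor point to double-check is the case $i = 1$, where $S^{i-1} = S^{0}$ consists of two points; but the unique map $S^{0} \to \{\mathrm{pt}\}$ still sends both points to the basepoint, and the resulting space is a circle, consistent with the formula. For $i \geq 2$ the sphere $S^{i-1}$ is connected and there is genuinely only one attaching map up to homotopy (indeed only one at all). In neither case is there any freedom in the attaching map, which is what makes the conclusion independent of the matching $M$.

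There is no serious obstacle here; the argument is a one-line reduction of Theorem~\ref{theorem_Forman} combined with the elementary fact that a $CW$-complex with a single $0$-cell and $c$ cells of dimension $i > 0$ (and nothing in between) is a wedge of $c$ copies of $S^{i}$. The proposition can therefore be stated as an immediate corollary of Theorem~\ref{theorem_Forman}.
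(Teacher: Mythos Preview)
Your proof is correct and follows exactly the route the paper takes: the paper simply records, just before the proposition, that a $CW$-complex with one $0$-cell and $c$ cells of dimension $i$ is (homotopy equivalent to) a wedge of $c$ $i$-spheres, and then combines this observation with Theorem~\ref{theorem_Forman}. Your argument is a faithful unpacking of that one-line observation, so there is nothing to add.
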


For a detailed treatment of the theory of $CW$-complexes,
we direct the reader to~\cite{Hatcher}.

\section{A bound on the dimension of the homotopy type}
\label{section_upper_bound}

We begin by studying a family of sets
that we use as building blocks in order to
understand the van der Waerden complex.
\begin{definition}
Define $\Gamma(k)$ to be the family of subsets given by
$$  \Gamma(k)
   =
      \{  F \: : \: \{0,k\} \subseteq F \subseteq [0,k], \: \gcd(F) = 1 \}  ,  $$
where $[0,k] = \{0, 1, \ldots, k\}$ and $\gcd(F)$ denotes the greatest common divisor of the elements of~$F$. 
\end{definition}
In other words, $\Gamma(k)$ is the family
of all subsets containing the pair $\{0,k\}$ and contained in the interval $[0,k]$
such that the only arithmetic sequence from $0$ to $k$ containing this family
is the entire interval $[0,k]$.
Even though $\Gamma(k)$ is not a simplicial
complex, we refer to the sets in the collection~$\Gamma(k)$ as faces.

\begin{lemma}
Let $k$ be a positive integer which is not squarefree.
Then the family $\Gamma(k)$ has a discrete Morse matching
with no critical cells.
\label{lemma_non_squarefree}
\end{lemma}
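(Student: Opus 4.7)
My plan is to construct the matching by toggling a single carefully chosen element $q \in \{1, \ldots, k-1\}$. Since $k$ is not squarefree, I would fix a prime $p$ with $p^{2} \mid k$ and set $q = k/p$. The decisive feature of this choice is that every prime divisor of $k$ also divides $q$: the prime $p$ survives in $k/p$ because $p^{2} \mid k$, and each other prime factor of $k$ is untouched when a single factor of $p$ is removed. Equivalently, $\operatorname{rad}(k) \mid q$, so every divisor $d > 1$ of $k$ shares a common prime with $q$. Note also that $2 \leq p \leq q = k/p \leq k/2 < k$, so toggling $q$ does not affect the requirement $\{0,k\} \subseteq F$.

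I would then declare the matching
\[
   M \;=\; \bigl\{\,(F,\, F \cup \{q\}) \: : \: F \in \Gamma(k),\ q \notin F \,\bigr\},
\]
and verify two things: that $M$ is well defined on $\Gamma(k)$ (so every face is matched), and that it is acyclic. Well-definedness in the upward direction is immediate from $\gcd(F \cup \{q\}) \mid \gcd(F) = 1$. The downward direction is the crux: if $F \in \Gamma(k)$ with $q \in F$ and $\gcd(F \setminus \{q\}) = d > 1$, then $d \mid k$ because $k \in F \setminus \{q\}$, so $d$ has a prime divisor $p'$ dividing $k$. By the key property $\operatorname{rad}(k) \mid q$, we get $p' \mid q$, whence $p' \mid \gcd(F)$, contradicting $\gcd(F) = 1$. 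Therefore $F \setminus \{q\} \in \Gamma(k)$, and each $F \in \Gamma(k)$ is matched either with $F \cup \{q\}$ (when $q \notin F$) or with $F \setminus \{q\}$ (when $q \in F$), leaving no critical cells.

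For acyclicity, suppose $x_{1} \prec u(x_{1}) \succ x_{2} \prec u(x_{2}) \succ \cdots \succ x_{1}$ were a nontrivial Morse cycle. Each $x_{i}$ lies in the lower end of a matched pair, so $q \notin x_{i}$ and $u(x_{i}) = x_{i} \cup \{q\}$. Since all $x_{i}$ must have the same cardinality, the covering $u(x_{i}) \succ x_{i+1}$ forces $x_{i+1} = u(x_{i}) \setminus \{e\}$ for some single element $e$; but $q \notin x_{i+1}$ then forces $e = q$, hence $x_{i+1} = x_{i}$, contradicting the nontriviality of the cycle.

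The main obstacle I anticipate is the downward well-definedness step, since this is precisely where non-squarefreeness of $k$ is genuinely used and where the pivot $q = k/p$ is forced by the argument. Everything else --- upward well-definedness, the valid range of $q$, and acyclicity of a single-element toggle --- then follows essentially by bookkeeping.
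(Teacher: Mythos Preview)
Your proof is correct and follows essentially the same strategy as the paper's: toggle a single element of $(0,k)$ that is divisible by every prime factor of $k$, so that both $F\cup\{\text{pivot}\}$ and $F\setminus\{\text{pivot}\}$ remain in $\Gamma(k)$, and then observe that a one-element toggle is automatically acyclic. The only difference is the choice of pivot---the paper takes $s=\prod_{p\mid k}p$ while you take $q=k/p$ for some $p$ with $p^{2}\mid k$---but both choices work for exactly the reason you isolate, namely that $\operatorname{rad}(k)$ divides the pivot.
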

\begin{proof}
Let $s$ denote the product of all of the distinct primes dividing $k$.
Since $k$ is not squarefree, we have that $s < k$.

Let $F$ be a face in $\Gamma(k)$ such that
$s \not\in F$.
Note that $\gcd(F \cup \{s\}) = \gcd(1,s) = 1$.
Thus the union $F \cup \{s\}$ also belongs to the family $\Gamma(k)$.
Similarly, if we have a face $F$ in $\Gamma(k)$ such that $s \in F$,
we claim that $F - \{s\}$ is also a face of $\Gamma(k)$.
Assume that the prime $p$ divides $\gcd(F - \{s\})$.
It follows from $k \in F -\{s\}$ that $p | k$.
Since~$s$ is the squarefree part of $k$,
the prime~$p$ also divides $s$.
This yields the contradiction that
$p$ divides $\gcd(\gcd(F - \{s\}), s) = \gcd(F) = 1$.
Hence we conclude that
the set difference $F - \{s\}$ is 
also in the family $\Gamma(k)$, proving the claim.

Given a face $F$, match it with
the symmetric difference $F \triangle \{s\}$.
This defines an acyclic matching
on $\Gamma(k)$ with no critical cells,
as claimed.
\end{proof}

\begin{proposition}
Let $k$ be a squarefree positive integer.
There exists a discrete Morse matching on the family $\Gamma(k)$
which has only one critical cell given by
$$ 
\{0,k\} \cup \{k/q \:\: : \:\: q|k \text{ and $q$ prime}\}.  
$$
\label{proposition_squarefree}
\end{proposition}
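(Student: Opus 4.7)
The plan is to induct on $r$, the number of distinct prime factors of $k$, with trivial base case $r=0$ (where $k=1$ and $\Gamma(1)=\{\{0,1\}\}$ has its single element as the unique critical cell). For the inductive step with $r\ge 1$, fix any prime $p$ dividing $k$ and set $m=k/p$, which is squarefree with $r-1$ prime factors and satisfies $1\le m\le k-1$. Define an order-preserving map $\varphi:\Gamma(k)\to\{0,1\}$ (with $0\prec 1$) by $\varphi(F)=1$ if $F\setminus\{m\}\in\Gamma(k)$—equivalently, either $m\notin F$, or $m\in F$ with $\gcd(F\setminus\{m\})=1$—and $\varphi(F)=0$ otherwise. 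Order-preservation follows because if $F\subseteq G$ and $\varphi(F)=1$, then $G\setminus\{m\}\supseteq F\setminus\{m\}$ still contains $\{0,k\}$ and has gcd dividing $\gcd(F\setminus\{m\})=1$. The Patchwork Theorem then reduces the problem to constructing an acyclic matching on each fiber.

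On the fiber $\varphi^{-1}(1)$, pair every face $F$ with the symmetric difference $F\triangle\{m\}$; both endpoints remain in this fiber, so the matching is perfect with no critical cells. Acyclicity follows from a standard single-element toggle argument: in any directed cycle, each matched (downward) edge switches the indicator $[m\in F]$ from $1$ to $0$, whereas every unmatched upward edge $F\to F\cup\{a\}$ satisfies $a\neq m$ and preserves this indicator, so the total change around the cycle cannot vanish unless the cycle is empty.

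On the fiber $\varphi^{-1}(0)$, squarefreeness of $k$ plays the decisive role: for such $F$, the gcd $d:=\gcd(F\setminus\{m\})>1$ divides $k=pm$ but is coprime to $m$ (because $\gcd(F)=1$ while $m\in F$), which forces $d=p$. Hence every element of $F\setminus\{m\}$ is a multiple of $p$, and $F\mapsto F^*:=(F\setminus\{m\})/p$ is an order-preserving bijection from $\varphi^{-1}(0)$ onto $\Gamma(m)$. By the inductive hypothesis $\Gamma(m)$ admits an acyclic matching with unique critical cell $C_m=\{0,m\}\cup\{m/q:q\text{ prime},\,q\mid m\}$; pulling it back yields an acyclic matching on $\varphi^{-1}(0)$ whose unique critical cell is $\{m\}\cup p\cdot C_m=\{0,k\}\cup\{k/q:q\text{ prime},\,q\mid k\}$, which is precisely the claimed $C$. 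Combining both fiber matchings via the Patchwork Theorem gives the desired acyclic matching on $\Gamma(k)$ with the single critical cell $C$. The main subtlety will be verifying order-preservation of $\varphi$ together with the order-isomorphism $\varphi^{-1}(0)\cong\Gamma(m)$; both rest crucially on squarefreeness, which pins down $\gcd(F\setminus\{m\})\in\{1,p\}$ and thereby enables the recursive structure.
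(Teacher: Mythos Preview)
Your proof is correct and follows essentially the same strategy as the paper: induct on the number of prime factors, pivot on the element $m=k/p$, and identify the residual unmatched faces with $\Gamma(k/p)$ via division by $p$. The one noteworthy difference is that you package the acyclicity argument through an explicit order-preserving map $\varphi:\Gamma(k)\to\{0\prec 1\}$ and invoke the Patchwork Theorem, whereas the paper argues acyclicity more informally by asserting that the recursively matched part ``lies in ranks which are less than that of the elements which were first matched''; your formulation is cleaner and avoids any ambiguity in that rank comparison.
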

\begin{proof}
The proof is by induction on the number of prime factors of $k$.
If there are no prime factors, that is, $k = 1$, then
$\Gamma(1)$ only consists of the set $\{0,1\}$
which is then the only critical cell.

Assume now that $k$ has at least one prime factor
$p$.
We begin to create the matching as follows.
Let~$F$ be a set such that
$k/p \not\in F$ and $\gcd(F) = 1$.
Note that $\gcd(F) = 1$ implies that
$\gcd(F \cup \{k/p\}) = \gcd(\gcd(F), k/p) = 1$.
Hence the two sets $F$ and $F \cup \{k/p\}$
are faces of $\Gamma(k)$.
Match these two faces.
Note that among the elements that have been matched
so far, there is no directed cycle.

The remaining unmatched faces 
are of the form $F \cup \{k/p\}$
where $k/p \not\in F$, $\gcd(F) \neq 1$
and $\gcd(F \cup \{k/p\}) = 1$.
Observe that
$\gcd(\gcd(F), k/p) = 1$. Since $\gcd(F)$ divides~$k$,
the only option for $\gcd(F)$ is the prime $p$.
The unmatched faces are described by
$F \cup \{k/p\}$ where
 $k/p \not\in F$ and $\gcd(F) = p$.
Note these sets $F$ are in bijection with
faces in the family $\Gamma(k/p)$ by sending
$F$ to the set $F/p = \{i/p \: : \: i \in F\}$,
that is,
the faces of $\Gamma(k)$
which have not yet been matched
are the image of the map
$\varphi(G) = p \cdot G \cup \{k/p\}$
applied to the family~$\Gamma(k/p)$.

By the induction hypothesis we can find
a Morse matching for $\Gamma(k/p)$ where
the only critical cell is given by
$$  
     C 
     =
     \{0,k/p\} \cup \{k/pq \:\: : \:\: q|{k}/{p} \text{ and $q$ prime}\}    .
$$
By applying the map $\varphi$
to the Morse matching of $\Gamma(k/p)$ we obtain a matching
on the unmatched cells of $\Gamma(k)$.
The only remaining unmatched cell is
$$  
     \varphi(C) 
     =
     \{0,k\} \cup \{k/q \:\: : \:\: q|k/p \text{ and $q$ prime}\} 
       \cup \{k/p\} .
$$
This is the claimed unmatched face.

By induction there are no directed
cycles in the image $\varphi(\Gamma(k/p))$.
Furthermore, since the image $\varphi(\Gamma(k/p))$
lies in ranks which are less than
that of the elements which were first matched, there
can be no directed cycles between these two parts.
Hence we conclude the matching constructed is a Morse matching.
This completes the induction hypothesis and the proof.
\end{proof}

As an application, 
the matching constructed in Lemma~\ref{lemma_non_squarefree}
and
Proposition~\ref{proposition_squarefree} yields a combinatorial identity 
for the number theoretic M\"obius function.
Recall for a positive integer~$k$
that the M\"obius function is defined by  
$$
     \mu(k) = \begin{cases}
             	(-1)^{r}
                 & \text{if $k$ is square-free and has $r$ prime factors,} \\
                 0
                 & \text{if $k$ is not square-free.}
    	      \end{cases}
$$

\begin{corollary}
For $k$ a positive integer 
the M\"obius function $\mu(k)$ is given by
$$   \mu(k) = \sum_{F \in \Gamma(k)} (-1)^{|F|} . $$
\end{corollary}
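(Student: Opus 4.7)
The plan is to interpret the alternating sum $\sum_{F \in \Gamma(k)} (-1)^{|F|}$ via the discrete Morse matchings constructed in Lemma~\ref{lemma_non_squarefree} and Proposition~\ref{proposition_squarefree}. The key observation is that in any matching $M$, a matched pair $(F, F')$ with $F \subseteq F'$ and $|F'| = |F|+1$ contributes $(-1)^{|F|} + (-1)^{|F|+1} = 0$ to the sum. Therefore the alternating sum collapses to the contribution from the critical cells alone:
$$
\sum_{F \in \Gamma(k)} (-1)^{|F|} = \sum_{C \text{ critical}} (-1)^{|C|}.
$$

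First I would handle the case when $k$ is not squarefree. By Lemma~\ref{lemma_non_squarefree} there are no critical cells, so the sum is $0$, which matches $\mu(k) = 0$ by definition.

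Next I would handle the squarefree case. If $k = 1$, then $\Gamma(1) = \{\{0,1\}\}$, giving $(-1)^2 = 1 = \mu(1)$. If $k$ is squarefree with $r \geq 1$ distinct prime factors, Proposition~\ref{proposition_squarefree} gives a single critical cell
$$
C = \{0,k\} \cup \{k/q \,:\, q \mid k,\ q \text{ prime}\}.
$$
I need to verify $|C| = r+2$. Since $k$ is squarefree, the map $q \mapsto k/q$ on the $r$ prime divisors of $k$ is injective, so $\{k/q : q \mid k,\ q \text{ prime}\}$ has exactly $r$ elements. None of these equals $0$ (since $k/q \geq 1$) or $k$ (since $q \geq 2$ forces $k/q < k$), so $|C| = r+2$. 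Therefore the sum equals $(-1)^{r+2} = (-1)^r = \mu(k)$, completing the argument.

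There is no real obstacle here, just the bookkeeping step of checking that the elements of $C$ are distinct and computing the parity correctly in the edge case $k=1$ (where the formula $(-1)^{r+2}$ with $r=0$ still yields the correct value $\mu(1)=1$, so the two cases can in fact be stated uniformly).
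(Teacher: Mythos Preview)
Your proposal is correct and follows essentially the same approach as the paper: both use the matchings of Lemma~\ref{lemma_non_squarefree} and Proposition~\ref{proposition_squarefree} as a sign-reversing involution so that the alternating sum reduces to the contribution of the unique critical cell in the squarefree case. Your version is in fact more explicit, since you verify carefully that $|C| = r+2$, a step the paper compresses into the single line $(-1)^{|F|} = (-1)^{\#\text{prime factors of }k}$.
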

\begin{proof}
The matching of
Lemma~\ref{lemma_non_squarefree}
and
Proposition~\ref{proposition_squarefree}
yields a sign-reversing involution on
the set~$\Gamma(k)$.
The only unmatched face
$F = \{0,k\} \cup \{k/q \:\: : \:\: q|k \text{ and $q$ prime}\}$
occurs when~$k$ is squarefree, and
it satisfies
\begin{align*}
(-1)^{|F|}
& =
(-1)^{\text{\# prime factors of $k$}} = \mu(k) .
\qedhere
\end{align*}
\end{proof}

For $1 \leq x < y \leq n$, define the set 
$D(n,k,x,y)$ by
$$
D(n,k,x,y)
  =
\{ d \in \Ppp \:\: : \:\: d|y-x \text{ and } 
\{x, x+d, \ldots, y\} \in \vdW(n,k) \} .
$$
The following inclusion holds.
\begin{equation}
D(n,k,x,y)
\subseteq
\{ d \in \Ppp \:\: : \:\: d|y-x \text{ and } 
 (y-x)/d \leq k \} . 
\label{equation_D}
\end{equation}
In general this inclusion is not an equality 
since the right-hand side does not depend on $n$.
The following example illustrates this phenomenon. 

\begin{example}
{\rm
Consider the case  $n=7$ and $k=3$. We have that $\{1,7\}$ is an edge
in $\vdW(7,3)$
since it is contained in the facet $\{1,3,5,7\}$.
In fact, $D(7,3,1,7) = \{2,6\}$,
whereas the right-hand side of equation~\eqref{equation_D}
is given by the set $\{2,3,6\}$.
}
\end{example}

The following notion will be useful in what follows.

\begin{definition}
For a finite non-empty set $S = \{s_{1} < s_{2} < \cdots < s_{j}\}$ 
of positive integers,
define the {\em greatest common divisor after translation} as
$$
     \gcdtr(S) =
                \gcd(s_{2} - s_{1}, s_{3} - s_{1}, \ldots, s_{j} - s_{1}).
$$
\end{definition}

\noindent
Observe that $\gcdtr(S)$ is
the largest divisor $d$ of
the difference $\max(S)-\min(S)$
for which $\{\min(S), \min(S)+d, \ldots, \max(S)-d, \max(S)\}$ 
is an arithmetic progression that contains $S$.

\begin{proposition}
The van der Waerden complex 
$\vdW(n,k)$
can be expressed as the disjoint union
$$ \vdW(n,k)
       \: \: = \: \:
     \{\emptyset, \{1\}, \{2\}, \ldots, \{n\}\}
     \: \stackrel{\cdot}{\cup} \:
     \bigcup_{\onethingatopanother{1 \leq x < y \leq n}
                                     {d \in D(n,k,x,y)}}^{\cdot}
       (d \cdot \Gamma((y-x)/d) + x)
$$
where
$d \cdot \Gamma(j) + x$
denotes the family
$\{ d \cdot F + x : F \in \Gamma(j)\}$
and
$d \cdot F + x$ denotes the set
$\{d \cdot i + x : i \in F\}$.
\label{proposition_disjoint_union}
\end{proposition}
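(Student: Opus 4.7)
The plan is to show that the map sending a face $F$ with $|F| \geq 2$ to the triple $(\min(F), \max(F), \gcdtr(F))$ partitions these faces, and that the block over each admissible triple $(x, y, d)$ coincides exactly with $d \cdot \Gamma((y-x)/d) + x$. The empty face and the $n$ singletons are handled separately, since they constitute precisely the faces of size less than two.

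First I would verify that for any face $F$ with $x = \min(F)$, $y = \max(F)$, and $d = \gcdtr(F)$, the translated and rescaled set $F' = (F - x)/d$ lies in $\Gamma((y-x)/d)$: it is a subset of $[0, (y-x)/d]$ containing $\{0, (y-x)/d\}$, and its gcd equals $\gcdtr(F)/d = 1$ by definition of $\gcdtr$. Conversely, any $F'' \in \Gamma((y-x)/d)$ produces a set $d \cdot F'' + x$ whose minimum, maximum, and greatest common divisor after translation are respectively $x$, $y$, and $d$; hence the triple is uniquely recoverable from the face, which establishes the disjointness of the union and the fact that a face belongs to at most one block.

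The substantive step is the equivalence $F \in \vdW(n,k) \iff d \in D(n,k,x,y)$. The easy direction uses the chain of inclusions $F \subseteq \{x, x+d, \ldots, y\}$ together with the definition of $D(n,k,x,y)$. The main obstacle is the reverse implication: assuming $F$ lies in some facet $\{a, a+e, \ldots, a+ke\}$ of $\vdW(n,k)$, I would write $x = a + ie$ and $y = a + je$ with $0 \leq i \leq j \leq k$. The containment $F \subseteq \{a, a+e, \ldots, a+ke\}$ forces $e \mid f - x$ for every $f \in F$, hence $e \mid \gcdtr(F) = d$. A short index computation using $(y-x)/e = j - i \leq k$ then shows $\{x, x+d, \ldots, y\} \subseteq \{a, a+e, \ldots, a+ke\}$, which exhibits the reduced arithmetic progression $\{x, x+d, \ldots, y\}$ as a face of $\vdW(n,k)$ and places $d$ in $D(n,k,x,y)$. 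Combining the two directions and gathering the trivial faces completes the decomposition.
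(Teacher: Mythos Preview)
Your proposal is correct and follows the same approach as the paper: assign to each face $F$ with $|F|\geq 2$ the triple $(\min F,\max F,\gcdtr F)$ and read off the block from there. In fact your argument is more thorough than the paper's, which simply asserts that $F$ belongs to $d\cdot\Gamma((y-x)/d)+x$ and declares the decomposition, leaving the verification that $d\in D(n,k,x,y)$, the reverse inclusion, and the disjointness of the blocks implicit.
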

\begin{proof}
Let $F$ be a face in $\vdW(n,k)$.
If the face has cardinality at most $1$, then it belongs to the family
$\{\emptyset, \{1\}, \{2\}, \ldots, \{n\}\}$.
Otherwise, it has cardinality at least $2$.
For such a face, let
$x = \min(F)$,
$y = \max(F)$
and
$d = \gcdtr(F)$.
Note that
the face~$F$ is a subset of $d \cdot \Gamma((y-x)/d) + x$.
This yields the decomposition.
\end{proof}

We now use the aforementioned acyclic matching 
to obtain bounds for the dimension of the cells of
a $CW$-complex which is homotopy equivalent to 
the complex $\vdW(n,k)$. 
Our bounds are stated in terms of the 
{\em primorial function} $\Pi(x)$, which is defined as
the product
\[
\Pi(x) = \prod_{p \leq x} p,
\]
where $p$ ranges over all primes less than or equal to $x$.
Let $p_{i}$ denote the $i$th prime.
The product of the $r$ first primes
is then given by $\Pi(p_{r})$.

\begin{theorem}
Let $n$, $k$ and $r$ be positive integers such that
$k < \Pi(p_{r})$. 
Then the van der Waerden complex $\vdW(n,k)$
is homotopy equivalent to a $CW$-complex 
whose cells have dimension at most $r$.
\label{theorem_main}
\end{theorem}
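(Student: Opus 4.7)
The plan is to construct a discrete Morse matching on the face poset of $\vdW(n,k) - \{\emptyset\}$ by applying the Patchwork Theorem (Theorem~\ref{theorem_patchwork}) to the decomposition furnished by Proposition~\ref{proposition_disjoint_union}, and then to invoke Forman's theorem (Theorem~\ref{theorem_Forman}). Let $Q$ be the poset consisting of a minimum element $\hz$ together with the triples $(x,y,d)$ indexing the non-trivial pieces of Proposition~\ref{proposition_disjoint_union}, ordered by $(x,y,d) \preceq_Q (x',y',d')$ iff $x \geq x'$, $y \leq y'$, and $d' \mid d$. Define $\varphi \colon \vdW(n,k) - \{\emptyset\} \to Q$ by sending each singleton to $\hz$ and each face $F$ with $|F| \geq 2$ to the triple $(\min F, \max F, \gcdtr(F))$. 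The key point is that $F \subseteq F'$ with $|F|, |F'| \geq 2$ forces $\min F \geq \min F'$, $\max F \leq \max F'$, and $\gcdtr(F') \mid \gcdtr(F)$; the last divisibility holds because every difference of elements of $F$ is a sum of consecutive differences in the sorted listing of $F'$, each of which is a multiple of $\gcdtr(F')$. Thus $\varphi$ is order preserving, and by Proposition~\ref{proposition_disjoint_union} its fibers are precisely $\varphi^{-1}(\hz) = \{\{1\}, \ldots, \{n\}\}$ and $\varphi^{-1}(x,y,d) = d \cdot \Gamma((y-x)/d) + x$.

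On each non-trivial fiber $d \cdot \Gamma(j) + x$, where $j = (y-x)/d$, I would transport the Morse matching given by Lemma~\ref{lemma_non_squarefree} (when $j$ is not squarefree) or by Proposition~\ref{proposition_squarefree} (when $j$ is squarefree) through the bijection $G \mapsto d \cdot G + x$, and leave the singleton fiber unmatched. By the Patchwork Theorem, the union is a discrete Morse matching on $\vdW(n,k) - \{\emptyset\}$. The critical cells are the $n$ singletons, together with, for each triple $(x,y,d)$ whose $j = (y-x)/d$ is squarefree, the single face
$$
   \{x, y\} \cup \bigl\{x + d \cdot j/q \: : \: q \mid j, \: q \text{ prime}\bigr\},
$$
which has cardinality $\omega(j) + 2$ and hence dimension $\omega(j) + 1$, where $\omega(j)$ denotes the number of distinct prime factors of $j$.

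To finish, I would exploit the hypothesis $k < \Pi(p_r)$ as follows: any squarefree positive integer $j$ with $\omega(j) = s$ prime factors satisfies $j \geq \Pi(p_s)$, so the chain $\Pi(p_s) \leq j \leq k < \Pi(p_r)$ forces $s \leq r - 1$. Consequently every critical face of positive dimension has dimension at most $(r-1) + 1 = r$, and Forman's theorem produces a $CW$-complex homotopy equivalent to $\vdW(n,k)$ all of whose cells have dimension at most $r$. The main technical point is the order-preservation of $\varphi$, which rests on the divisibility $\gcdtr(F') \mid \gcdtr(F)$; once that is in hand, the Patchwork assembly and the bookkeeping of critical cells reduce to the structural results already established for the families $\Gamma(j)$.
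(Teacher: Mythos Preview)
Your proof is correct and follows essentially the same approach as the paper: decompose via Proposition~\ref{proposition_disjoint_union}, apply the Patchwork Theorem with the matchings from Lemma~\ref{lemma_non_squarefree} and Proposition~\ref{proposition_squarefree} on each fiber, and bound the dimension of the resulting critical cells using $k < \Pi(p_{r})$. The only minor difference is that the paper places both singletons and adjacent pairs $\{i,i+1\}$ in the fiber over $\hz$ and matches them together (leaving a single critical $0$-cell $\{n\}$), whereas you leave all $n$ singletons critical; this does not affect the present theorem, though the paper's refinement is what makes Corollary~\ref{corollary_homology} immediate.
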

\begin{proof}
The case $k = 1$ follows
directly from the fact that~$\vdW(n,1)$
is the complete graph~$K_{n}$.
Consider the case when $k>1$.
Define the poset $Q$ by
$$
Q
   =
\{(x,y,d) \in \NN^3  \:\: : \:\:
     1 \leq x < y \leq n, \:
         y-x \geq 2, \: d \in D(n,k,x,y) \}
   \cup
  \{\hz\},
$$ 
with partial order 
defined by
$(x,y,d) \leq_{Q} (x^{\prime}, y^{\prime},d^{\prime})$
if $x^{\prime} \leq x < y \leq y^{\prime}$
and
$d^{\prime} | d$;
and
$\hz \leq_{Q} z$ for all $z \in Q$.
We also define the map $\varphi : \vdW(n,k) - \{\emptyset\} \longrightarrow Q$ as
\[
\varphi(F) = \begin{cases}
        	(\min(F), \max(F), \gcdtr(F))
            & \text{if $\max(F) - \min(F) \geq 2$,} \\
           \hz
            & \text{otherwise.}
    	\end{cases}
\]
Note that $\varphi$ is order-preserving.
Also, 
for $(x,y,d) \in Q$,
the fiber
$\varphi^{-1}((x,y,d))$
is given by
the family
$d \cdot \Gamma((y-x)/d) + x$.
Finally,
the fiber $\varphi^{-1}(\hz)$
consists of
singletons and the pairs
$\{i,i+1\}$, where $1 \leq i < n$.

The fibers of the map $\varphi$ yield the 
decomposition of
the complex  $\vdW(n,k)$ in Proposition~\ref{proposition_disjoint_union}.
Furthermore, the fiber $\varphi^{-1}(\hz)$ has
an acyclic matching, where
the singleton $\{i\}$ is matched
with the pair $\{i,i+1\}$.
The other fibers each have an acyclic matching by 
Lemma~\ref{lemma_non_squarefree}
and
Proposition~\ref{proposition_squarefree}.
Finally, we apply
Theorem~\ref{theorem_patchwork} to conclude that
the union of the aforementioned acyclic matchings is itself an acyclic matching
on all of~$\vdW(n,k)$. 

The singleton
$\{n\}$ is the only critical cell
in the fiber $\varphi^{-1}(\hz)$. 
If $(y-x)/d$ is not squarefree
then there are no critical cells
in the fiber
$\varphi^{-1}((x,y,d))$.
On the other hand, if $(y-x)/d$ is squarefree then
the fiber
$\varphi^{-1}((x,y,d))$ has exactly one critical cell
with cardinality at most $2$ plus the number
of distinct primes factors of
$(y-x)/d$.
Since the inequality 
$(y-x)/d \leq k < \Pi(p_{r})$ holds,
the number of prime factors 
of $(y-x)/d$ is at most $r-1$.
Hence the size of the critical cells in our acyclic matching 
is at most $r+1$,
that is, their dimensions are at most~$r$.
\end{proof}

\begin{corollary}
If $k < \Pi(p_{r})$ then
the $i$th reduced homology
group~$\widetilde{H}_{i}(\vdW(n,k))$
of the van der Waerden complex vanishes
for $i \geq r+1$.
\label{corollary_homology}
\end{corollary}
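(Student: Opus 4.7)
The plan is to deduce this corollary directly from Theorem~\ref{theorem_main}, treating it as a short homotopy-invariance argument rather than anything requiring fresh combinatorics. By that theorem, under the hypothesis $k < \Pi(p_{r})$ the complex $\vdW(n,k)$ is homotopy equivalent to some $CW$-complex~$X$ all of whose cells have dimension at most~$r$.

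First I would recall that cellular homology of a $CW$-complex $X$ is computed from the cellular chain complex, whose group in degree~$i$ is free abelian on the set of $i$-cells. Since $X$ has no cells of dimension $\geq r+1$, the chain groups $C_{i}(X)$ vanish in that range, and hence so do the (reduced) homology groups $\widetilde{H}_{i}(X)$ for $i \geq r+1$. Second, I would invoke the homotopy invariance of singular homology to conclude $\widetilde{H}_{i}(\vdW(n,k)) \cong \widetilde{H}_{i}(X) = 0$ for all $i \geq r+1$, which is the claim.

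Since both inputs (the cellular computation of homology and its agreement with singular homology, plus homotopy invariance) are standard and referenced implicitly via the citation to Hatcher in the preliminaries, there is no genuine obstacle; the proof is essentially a one-line corollary of Theorem~\ref{theorem_main}. The only thing worth writing out carefully is the reduction of reduced homology in the presence of a single $0$-cell versus several, which is routine. Accordingly, I would keep the proof to two or three sentences, emphasizing that the dimension bound on cells controls the vanishing range of homology via the cellular chain complex.
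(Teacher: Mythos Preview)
Your proposal is correct and matches the paper's treatment: the paper states the corollary immediately after Theorem~\ref{theorem_main} with no proof, implicitly regarding it as the obvious consequence via homotopy invariance of homology and the vanishing of cellular chains above the top cell dimension. There is nothing to add.
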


We end this section by considering the
asymptotic behavior of $r$ as $k$ tends to infinity.
Recall
\begin{equation}
 \lim_{r \longrightarrow \infty}
         \frac{\log \Pi(p_{r})}{r \cdot \log \log \Pi(p_{r})}
     =
          1  ; 
\label{equation_Hardy_Wright}
\end{equation}
see the beginning of Section~22.10
in~\cite{Hardy_Wright} just before Theorem~430.

\begin{theorem}
Let $r(k)$ be the unique positive integer $r$ 
such that the inequalities
$\Pi(p_{r-1}) \leq k < \Pi(p_{r})$ hold.
Then the following asymptotic result holds:
$$ r(k) \sim \frac{\log k}{\log\log k} $$
as $k \longrightarrow \infty$.
\end{theorem}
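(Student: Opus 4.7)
The plan is to first convert equation~\eqref{equation_Hardy_Wright} into the cleaner asymptotic $\log \Pi(p_r) \sim r \log r$, and then use the sandwich $\Pi(p_{r-1}) \leq k < \Pi(p_r)$ to transfer this from $r$ to $k$.

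For the first step, I would write $L_r = \log \Pi(p_r)$, so that equation~\eqref{equation_Hardy_Wright} becomes $L_r \sim r \log L_r$ (noting $\log\log \Pi(p_r) = \log L_r$). In particular $L_r \to \infty$ as $r \to \infty$. Taking logarithms of both sides yields
\[
\log L_r \;=\; \log r \,+\, \log \log L_r \,+\, o(1),
\]
and since $\log \log L_r = o(\log L_r)$, this forces $\log L_r \sim \log r$. Substituting back into $L_r \sim r \log L_r$ yields $\log \Pi(p_r) \sim r \log r$.

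For the second step, I would take logarithms of the sandwich $\Pi(p_{r-1}) \leq k < \Pi(p_r)$ and apply the previous asymptotic to each endpoint; using $(r-1)\log(r-1) \sim r \log r$, both endpoints are $r \log r \, (1+o(1))$, so $\log k \sim r \log r$. Applying logarithms once more and absorbing the resulting $\log \log r$ term exactly as in the first step gives $\log\log k \sim \log r$. Combining these two asymptotics,
\[
r \;\sim\; \frac{\log k}{\log r} \;\sim\; \frac{\log k}{\log\log k},
\]
as claimed.

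The only subtle point is the bootstrap from $L_r \sim r \log L_r$ to $L_r \sim r \log r$; everything else is a routine sandwich and substitution, and the same bootstrap trick reappears when converting $\log k \sim r \log r$ into $\log \log k \sim \log r$.
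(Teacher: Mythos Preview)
Your argument is correct. The bootstrap step from $L_r \sim r\log L_r$ to $\log L_r \sim \log r$ is standard and valid, and the subsequent sandwich and second bootstrap are carried out properly.

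The paper takes a somewhat different, more direct route. Rather than first upgrading equation~\eqref{equation_Hardy_Wright} to the intermediate asymptotic $\log \Pi(p_r)\sim r\log r$, it applies the function $f(x)=\log x/\log\log x$ directly to the sandwich $\Pi(p_{r-1})\le k<\Pi(p_r)$: since $f$ is eventually increasing, one obtains
\[
\frac{r-1}{r}\cdot\frac{\log\Pi(p_{r-1})}{(r-1)\log\log\Pi(p_{r-1})}
\;\le\;
\frac{\log k}{r\log\log k}
\;<\;
\frac{\log\Pi(p_r)}{r\log\log\Pi(p_r)},
\]
and then equation~\eqref{equation_Hardy_Wright} forces both outer terms to $1$, so the squeeze theorem gives $r\sim \log k/\log\log k$ in one step. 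The paper's approach is shorter and avoids any bootstrap, at the cost of the small observation about monotonicity of $f$. Your approach is slightly longer but has the advantage of isolating the clean asymptotic $\log\Pi(p_r)\sim r\log r$ (equivalently $\vartheta(p_r)\sim r\log r$), which is of independent interest and makes the dependence on the prime number theorem more transparent.
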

\begin{proof}
Note that the function $\log x/\log\log x$
is increasing for $x > e^{e}$.
Hence for sufficiently large~$k$
(take $k \geq 30$)
we have the two inequalities
$$
\frac{r-1}{r}
\cdot 
\frac{\log \Pi(p_{r-1})}{(r-1) \cdot \log \log \Pi(p_{r-1})}
\leq
\frac{\log k}{r \cdot \log \log k}
<
\frac{\log \Pi(p_{r})}{r \cdot \log \log \Pi(p_{r})}
,
$$
where $r = r(k)$.
Note that $r(k) \longrightarrow \infty$ as $k \longrightarrow \infty$.
Hence by equation~\eqref{equation_Hardy_Wright}
both the right-hand and left-hand sides of the
above bound tend to $1$
as $k \longrightarrow \infty$.
Thus the asymptotic follows by the squeeze theorem.
\end{proof}

We can now state an asymptotic version of
our homotopy equivalence dimension bound.
\begin{theorem}
\label{asymptotic}
The van der Waerden complex $\vdW(n,k)$ 
is homotopy equivalent to a $CW$-complex
whose cells have dimension
asymptotically at most
$\log k / \log\log k$.
\label{theorem_asymptotic}
\end{theorem}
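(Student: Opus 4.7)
The plan is to combine the two results immediately preceding the statement. Theorem~\ref{theorem_main} gives, for every positive integer $r$ with $k < \Pi(p_{r})$, a $CW$-model of $\vdW(n,k)$ whose cells have dimension at most $r$. The previous theorem shows that the smallest such $r$, namely
\[
   r(k) = \min\{r \in \Ppp \: : \: k < \Pi(p_{r})\},
\]
satisfies $r(k) \sim \log k / \log\log k$ as $k \to \infty$. So the proof is essentially a bookkeeping step linking these two facts.

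Concretely, I would proceed as follows. First, fix $k$ and set $r = r(k)$, so that by definition $\Pi(p_{r-1}) \leq k < \Pi(p_{r})$. Then invoke Theorem~\ref{theorem_main} with this choice of $r$ to conclude that $\vdW(n,k)$ is homotopy equivalent to a $CW$-complex each of whose cells has dimension at most $r(k)$. Second, apply the preceding theorem to obtain
\[
   r(k) \sim \frac{\log k}{\log\log k} \qquad \text{as } k \to \infty,
\]
which means that the dimension bound $r(k)$ is asymptotically at most $\log k / \log\log k$ in the precise sense that $r(k)/(\log k / \log\log k) \to 1$. Combining these two statements gives exactly the asserted asymptotic bound on the dimension of the cells.

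There is really no obstacle here, since the hard work has already been done: the discrete Morse matching in Theorem~\ref{theorem_main} supplies the $CW$-model, and the Hardy--Wright estimate in equation~\eqref{equation_Hardy_Wright} supplies the asymptotics of the primorial. The only thing to watch is that the statement is phrased in terms of an asymptotic upper bound, so one must be careful to say ``at most'' rather than ``equal to'': the dimension bound produced by Theorem~\ref{theorem_main} is $r(k)$, which is asymptotic to, and in particular asymptotically bounded above by, $\log k / \log\log k$. Thus the proof reduces to one sentence citing Theorem~\ref{theorem_main} with $r = r(k)$ and one sentence quoting the previous theorem.
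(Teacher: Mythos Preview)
Your proposal is correct and matches the paper's approach exactly: the paper presents Theorem~\ref{theorem_asymptotic} as an immediate consequence of Theorem~\ref{theorem_main} combined with the preceding asymptotic $r(k)\sim\log k/\log\log k$, with no further argument given. Your write-up simply makes explicit the one-line deduction the paper leaves to the reader.
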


\section{Bounds for contractible complexes}
\label{section_contractible}

Recall that for a prime $p$ and integer $s$
that $p^{r} || s$ means that $r$ is the largest
power of $p$ dividing~$s$, that is,
$p^{r} | s$ but $p^{r+1} \not| s$.

\begin{proposition}
For an integer $a > 1$, let $L(a) = \lcm(1,2,\ldots,a)$.
Let $p$ be a prime such that
$p^{\alpha} || L$ for $\alpha \geq 1$.
Let $k$ be an integer greater than or equal to
$L(a)/p^{\alpha-1}$,
and let $n$ be an integer bounded by $a \cdot k < n \leq (a+1) \cdot k$.
Then the van der Waerden complex  $\vdW(n,k)$
is contractible.
\label{proposition_L}
\end{proposition}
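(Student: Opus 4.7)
The plan is to construct a discrete Morse matching on $\vdW(n,k)$ whose only critical cell is a single $0$-cell; contractibility then follows from Proposition~\ref{proposition_wedge_of_spheres} (with $c=0$ higher-dimensional critical cells).

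I would start from the decomposition in Proposition~\ref{proposition_disjoint_union} and the fibered matching built in the proof of Theorem~\ref{theorem_main}. That matching already leaves the singleton $\{n\}$ as a critical $0$-cell, and produces, in each fiber $\varphi^{-1}((x,y,d))$ with $m = (y-x)/d$ squarefree, the critical face
\[
C(x,y,d) = \{x,y\} \cup \{x + (y-x)/q \,:\, q \mid m,\ q \text{ prime}\}.
\]
So the remaining task is to pair off all the cells $C(x,y,d)$ in an acyclic way, leaving $\{n\}$ as the sole critical face.

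The hypothesis $a\cdot k < n \leq (a+1)\cdot k$ ensures that every facet step $d$ lies in $\{1,2,\ldots,a\}$, and thus $d$ divides $L(a)$. The sharper bound $k \ge L(a)/p^{\alpha-1}$ would be exploited by supplying a pivot element $v = v(x,y,d) \in [n]$, whose definition uses the prime power $p^{\alpha-1}$ (for example, $v$ chosen as an element of an ambient arithmetic progression containing $C(x,y,d)$ whose offset from $x$ is divisible by $L(a)/p^{\alpha-1}$). I would then match $C(x,y,d)$ with the symmetric difference $C(x,y,d) \triangle \{v(x,y,d)\}$. Acyclicity of the resulting combined matching would follow from an application of the Patchwork Theorem~\ref{theorem_patchwork} to a suitably coarsened order-preserving map $\psi$ that groups the fibers of $\varphi$ involved in the new pairings.

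The main obstacle is making the choice of $v(x,y,d)$ both canonical and compatible across different fibers. Specifically, we must verify \emph{(i)} that $v(x,y,d)$ lies in $[n]$ (where the bound $n > a\cdot k$ enters, supplying just enough room to place $v$); \emph{(ii)} that $C(x,y,d) \triangle \{v(x,y,d)\}$ remains a face of $\vdW(n,k)$ (where $k \ge L(a)/p^{\alpha-1}$ enters, guaranteeing that $v$ fits inside a $(k+1)$-term arithmetic progression extending $C$); and \emph{(iii)} that no directed cycle is introduced. The role of the prime $p$ with exponent $\alpha$ should emerge as a case analysis on the $p$-adic valuation of $d$: the ``hardest'' critical cells to eliminate are those for which $d$ is divisible by $p^\alpha$, and the bound $k \ge L(a)/p^{\alpha-1}$ is precisely the threshold at which even these last cells succumb to the same symmetric-difference pairing.
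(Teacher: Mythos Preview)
Your outline has the right shape---a Patchwork matching with a single critical $0$-cell---but the specific plan of layering a second matching on top of the one from Theorem~\ref{theorem_main} runs into a structural problem you have not resolved. If you take a critical cell $C(x,y,d)$ from the $(x,y,d)$-fibration and form $C(x,y,d)\triangle\{v\}$, the new face typically has a different $\gcdtr$, hence lives in a different fiber $\varphi^{-1}((x,y,d'))$ where it is \emph{already} matched by the Theorem~\ref{theorem_main} scheme. So you cannot simply ``pair off the leftover critical cells''; you would have to dismantle and rebuild the matching on every fiber touched by your new pairings, and your proposal gives no mechanism for this beyond invoking a ``suitably coarsened'' $\psi$. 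The pivot $v(x,y,d)$ is also left unspecified, and your guess that the hard case is ``$d$ divisible by $p^{\alpha}$'' is off target (the $p$-adic case split turns out to be on $y-x$, not on the step).

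The paper avoids this tangle by never using the fine $(x,y,d)$-fibration at all. It fibers only by $(\min F,\max F)=(x,y)$, and inside each fiber $\varphi^{-1}((x,y))$ it uses a \emph{single} pivot $x+\lcm(T)$, where $T$ is the set of divisor-minimal facet-steps $d$ for which some facet of step $d$ contains the edge $\{x,y\}$. The entire substance of the proof is then the number-theoretic claim that $\lcm(T)<y-x$, established by assuming $y-x=\lcm(T)$, writing $y-x=p^{\beta}c'$ with $p\nmid c'$, and deriving a contradiction in the two cases $\beta\le 1$ (where $y-x\le L(a)/p^{\alpha-1}\le k$ forces step $1\in T$) and $\beta\ge 2$ (where one exhibits a facet of step $p^{\beta-1}$ containing $\{x,y\}$, contradicting minimality in $T$). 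This is the missing idea in your sketch: the correct pivot and the inequality $\lcm(T)<y-x$.
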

\begin{proof}
Define the poset $Q$ as
$$
Q
   =
\{(x,y) \in \NN^2 \:\: : \:\:
     1 \leq x < y \leq n, \:
         y-x \geq 2 \}
   \cup
  \{\hz\},
$$ 
with the partial order relation given by
$(x,y) \leq_{Q} (x^{\prime}, y^{\prime})$
if
$x^{\prime} \leq x < y \leq y^{\prime}$,
and
$\hz \leq_{Q} z$ for all $z \in Q$.
We also define the map
$\varphi : \vdW(n,k) - \{\emptyset\} \longrightarrow Q$ as
\[
\varphi(F) = \begin{cases}
        	(\min(F), \max(F))
            & \text{if $\max(F) - \min(F) \geq 2$,} \\
           \hz
            & \text{otherwise.}
    	\end{cases}
\]
Note that $\varphi$
is order-preserving.

As in the proof of
Theorem~\ref{theorem_main},
the fiber $\varphi^{-1}(\hz)$
consists of
singletons and the pairs
$\{i,i+1\}$ such that $1 \leq i < n$.
Again, we match the singleton
$\{i\}$ with the pair $\{i,i+1\}$,
leaving us with
the critical cell $\{n\}$.

We will now describe a matching
on the fiber $\varphi^{-1}((x,y))$ for $(x,y) \in Q$. 
We have that $\{x,y\}$ is an
edge of~$\vdW(n,k)$, and that $y-x \geq 2$. 
Note that all facets of 
$\vdW(n,k)$
have steps at most $a$,
since for any $b > a$
the difference between
$z + b \cdot k$ and
$z$ is greater than $n-1$.

Let $S$ be the set
of steps of the facets containing the edge $\{x,y\}$,
that is,
\begin{align*}
   S
    & =
  \{d \in \Ppp \: : \:
  \{x,y\} \subseteq \{z, z+d, \ldots, z + k \cdot d\} \subseteq [n]\} .
\end{align*}
Note that $S$ is a non-empty subset of $[a]$.
Next let $T$ be the set of minimal elements of $S$
with respect to divisibility.
We have that $d | y-x$ for all $d \in T$.
Thus the least common multiple
$\lcm(T)$ also divides $y-x$.
Also note that
$\lcm(T) \leq \lcm(S) \leq L(a) \leq p^{\alpha - 1} \cdot k$.

We claim that $y-x \neq \lcm(T)$.
To reach a contradiction, suppose that $y-x = \lcm(T)$.
Note that $1 \not\in T$, since if the element $1$ did belong to $T$,
the set $T$ would be the singleton $\{1\}$, implying
that $y-x = \lcm(T) = 1$, which contradicts $y-x \geq 2$.
Factor $L(a)$ as $p^{\alpha} \cdot c$ where $c$ is not divisible by $p$.
Assume now that $y-x$ factors as
$p^{\beta} \cdot c^{\prime}$
such that $p$ does not divide $c^{\prime}$.
Since $y-x = \lcm(T)$ divides~$L(a)$,
we have that $c^{\prime} | c$ and
$\beta \leq \alpha$.
We have two cases to consider.

The first case is $\beta \leq 1$.
We have the following string of inequalities:
$$  y - x = p^{\beta} \cdot c^{\prime}
             \leq p^{\beta} \cdot c
             \leq p \cdot c
           =  \frac{L(a)}{p^{\alpha-1}}  \leq k  .    $$
This shows that
the edge $\{x,y\}$ is contained
in an edge of step $1$, yielding the contradiction $1 \in T$.

The second case is $\beta \geq 2$. We then have
$$  y - x = p^{\beta} \cdot c^{\prime}
             \leq p^{\beta} \cdot c
             = p^{\beta} \cdot \frac{L(a)}{p^{\alpha}}
             = p^{\beta-1} \cdot \frac{L(a)}{p^{\alpha-1}}
             \leq  p^{\beta-1} \cdot k  .    $$

We now claim that there is a facet with step $p^{\beta-1}$
that contains the edge $\{x,y\}$.
Since~$p^{\alpha}$ divides~$L(a)$,
we have the inequality
$p^{\alpha} \leq a$.
We have the following string of inequalities.
\begin{align*}
(k+1) \cdot p^{\beta-1}
 =
    k \cdot p^{\beta-1} + p^{\beta-1} & \leq
    k \cdot p^{\beta-1} + k \cdot (p-1) \cdot p^{\beta-1} \\
& =
    k \cdot p^{\beta} \leq
    k \cdot p^{\alpha} \leq
    k \cdot a
<
    n.
\end{align*}
Hence the arithmetic sequence
$F = \{p^{\beta-1}, 2 \cdot p^{\beta-1}, \ldots, (k+1) \cdot p^{\beta-1}\}$,
which has step size $p^{\beta-1}$ and length $k$, is contained
in the vertex set $[n]$, that is, $F$ is indeed a facet.
By shifting this facet $F$, we obtain all the facets
of step $p^{\beta-1}$:
$$   F - (p^{\beta-1}-1),  \: F - (p^{\beta-1}-2), \: \ldots, \: F, \: 
    \ldots, \: F + (n-(k+1) \cdot p^{\beta-1})  .  $$
Since $p^{\beta-1}$ divides $y-x$
and $y-x \leq k \cdot p^{\beta-1}$, 
one of the above facets contains the edge $\{x,y\}$,
proving the claim.

Hence the element $p^{\beta-1}$ belongs to $S$.
Either $p^{\beta-1}$ is a minimal element
of~$S$ or it is not.
Let~$p^{\gamma}$ be an element of $T$ that is less than
or equal to $p^{\beta-1}$ in the divisor order,
that is,  $\gamma \leq \beta-1$.
Consider another element $t$ in $T$,
that is, $t \neq p^{\gamma}$.
We have that $p^{\delta} \| t$
implies that $\delta < \gamma$.
Hence $p^{\gamma}$ is the largest $p$-power that divides~$\lcm(T)$.
Since $\gamma \leq \beta - 1 < \beta$,
this contradicts the fact that $p^{\beta}$ is the smallest power of $p$
dividing $\lcm(T) = y-x$.

This proves our claim. Thus we conclude that
$\lcm(T) < y-x$.

We now continue to construct our matching on the fiber $\varphi^{-1}((x,y))$ as follows.
For a face $F \in \vdW(n,k)$ such that 
$\varphi(F) = (x,y)$, we
match $F$ with the symmetric difference $F \triangle \{x+\lcm(T)\}$.
This is well-defined because if the sum $x+\lcm(T)$ does not
belong to the face $F$, then $F \cup \{x+\lcm(T)\}$
is also a face of the complex. Note that this matching 
on the fiber $\varphi^{-1}((x,y))$
is acyclic and has no critical cells.

Finally, we apply
Theorem~\ref{theorem_patchwork} to conclude that the union of the aforementioned acyclic matchings is an acyclic matching
on $\vdW(n,k)$
with the unique critical cell
$\{n\}$. It follows that
the complex
$\vdW(n,k)$ is contractible.
\end{proof}

Directly we have the following result.
\begin{proposition}
Let $a >1$ be an integer, and define $L(a)$ as
in Proposition~\ref{proposition_L}. Let
$p_{1}^{\alpha_{1}} \cdot p_{2}^{\alpha_{2}} \cdots p_{m}^{\alpha_{m}}$
be the prime factorization of $L(a)$.
Let $M(a)$ be the maximum of the set
$$\left\{p_{1}^{\alpha_{1}-1}, 
p_{2}^{\alpha_{2}-1}, \ldots, 
p_{m}^{\alpha_{m}-1}\right\} . $$
Let $k$ be an integer greater than or equal to $L(a)/M(a)$,
and let $n$ be an integer bounded by $a \cdot k < n \leq (a+1) \cdot k$.
Then the van der Waerden complex $\vdW(n,k)$
is contractible.
\label{proposition_M}
\end{proposition}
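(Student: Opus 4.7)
The plan is to observe that Proposition~\ref{proposition_M} is an immediate optimization of Proposition~\ref{proposition_L} over the choice of prime. The hypothesis $k \geq L(a)/p^{\alpha-1}$ in Proposition~\ref{proposition_L} is easiest to satisfy when the denominator $p^{\alpha-1}$ is as large as possible, so the natural move is to select the prime that makes this denominator maximal, and then check that the hypotheses of Proposition~\ref{proposition_L} are met verbatim.

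Concretely, I would proceed as follows. First, choose an index $j \in \{1, 2, \ldots, m\}$ such that $p_{j}^{\alpha_{j}-1} = M(a)$; such a $j$ exists by the definition of $M(a)$ as the maximum of the finite set $\{p_{i}^{\alpha_{i}-1}\}_{i=1}^{m}$. Set $p = p_{j}$ and $\alpha = \alpha_{j}$, so that $p^{\alpha} \| L(a)$ with $\alpha \geq 1$. Then $L(a)/p^{\alpha-1} = L(a)/M(a)$, so the assumed lower bound $k \geq L(a)/M(a)$ is exactly the bound $k \geq L(a)/p^{\alpha-1}$ required by Proposition~\ref{proposition_L}. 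The bound $a \cdot k < n \leq (a+1) \cdot k$ on $n$ is identical in the two statements. Hence Proposition~\ref{proposition_L} applies with this particular prime $p$, and its conclusion yields that $\vdW(n,k)$ is contractible.

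There is no real obstacle here; the entire content is the observation that $M(a)$ is defined precisely to give the sharpest version of the hypothesis on $k$ among the primes dividing $L(a)$. The only minor sanity check to mention in the write-up is that at least one prime $p$ with $\alpha \geq 1$ exists, which is immediate since $a > 1$ forces $L(a) \geq 2$ and so $m \geq 1$.
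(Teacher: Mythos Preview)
Your proposal is correct and matches the paper's approach exactly: the paper states this proposition with the one-line remark ``Directly we have the following result,'' treating it as an immediate consequence of Proposition~\ref{proposition_L} obtained by choosing the prime $p$ that maximizes $p^{\alpha-1}$. Your write-up simply spells out this trivial deduction, including the sanity check that $a>1$ guarantees at least one prime divisor of $L(a)$.
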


\begin{remark}
{\rm
The prime $p$ in  $M(a) = p^{\alpha-1}$ must be $2$ or~$3$.
To see this, suppose that $p \geq 5$.
Since $2^{2} < p$, we have that
$p^{\alpha-1} < 2^{\beta-1} < 2^{\beta} < p^{\alpha}$, 
so there are at least two $2$-powers
between $p^{\alpha-1}$ and~$p^{\alpha}$.
Note that $2^{\beta} < p^{\alpha} \leq a$. Hence
the maximum $M(a)$ cannot be
given by $p^{\alpha-1}$.
}
\end{remark}

\begin{lemma}
View $L$ and $M$ in the previous proposition
as functions in the variable $a$. Then
the quotient $L/M$ is a weakly increasing function in $a$.
\label{lemma_L_M}
\end{lemma}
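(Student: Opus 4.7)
The plan is to prove the lemma by showing that the ratio $L(a)/M(a)$ can only weakly increase when $a$ is incremented by one, via a case analysis on whether $a+1$ is a prime power. Equivalently, I will verify $L(a+1) \cdot M(a) \geq L(a) \cdot M(a+1)$ for every integer $a \geq 2$.

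First I would handle the easy case: if $a+1$ is not a prime power, then $a+1$ has a proper divisor $> 1$ already appearing in $\{1,\ldots,a\}$, and for each prime $q$ the largest power of $q$ not exceeding $a+1$ equals the largest power of $q$ not exceeding $a$. Hence $L(a+1) = L(a)$ and the prime factorization is unchanged, so $M(a+1) = M(a)$, and the ratio is preserved.

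Next I would handle the prime-power case: suppose $a+1 = p^{\beta}$ with $p$ prime and $\beta \geq 1$. Then $p^{\beta-1} \leq a < p^{\beta}$, so the exponent of $p$ in $L$ jumps from $\beta-1$ to $\beta$, while exponents of all other primes $q \neq p$ are unaffected. Thus $L(a+1) = p \cdot L(a)$. Writing $M_{\neq p}(a)$ for the maximum of $q^{\gamma-1}$ taken over primes $q \neq p$ with $q^{\gamma} \| L(a)$, we obtain the two identities
\[
M(a) = \max\bigl(M_{\neq p}(a),\, p^{\beta-2}\bigr), \qquad
M(a+1) = \max\bigl(M_{\neq p}(a),\, p^{\beta-1}\bigr),
\]
where $p^{\beta-2}$ is interpreted as $1$ when $\beta = 1$. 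The goal then reduces to the inequality $p \cdot M(a) \geq M(a+1)$. If the maximum defining $M(a+1)$ is achieved by $p^{\beta-1}$, then $M(a) \geq p^{\beta-2}$ forces $p \cdot M(a) \geq p^{\beta-1} = M(a+1)$; otherwise $M(a+1) = M_{\neq p}(a) \leq M(a)$, and the inequality is immediate.

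Combining the two cases shows that $L(a+1)/M(a+1) \geq L(a)/M(a)$ for all $a \geq 2$, which gives the lemma by induction. The only mildly delicate point is bookkeeping the exponent of $p$ in the boundary case $\beta=1$ (where $p^{\beta-2}$ is a negative power) and confirming that the relevant $p$ is always included in the index set defining $M$; this is handled by interpreting $p^{-1}$ as an element absent from the set (equivalently, $\leq 1$), which does not affect the inequalities above. No other obstacles are anticipated, as the argument is a clean divisibility computation.
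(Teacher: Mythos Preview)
Your proof is correct and follows essentially the same approach as the paper's: both split into the cases where the new value is or is not a prime power, observe that $L$ gains a factor of $p$ in the latter case, and then bound the change in $M$ by noting that the only term in the defining maximum that moves is the $p$-contribution (from $p^{\beta-2}$ to $p^{\beta-1}$). Your reduction to the inequality $p\cdot M(a)\geq M(a+1)$ and the explicit handling of the boundary $\beta=1$ make the argument slightly more transparent, but the substance is identical.
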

\begin{proof}
First consider when $a$ is not a prime power.
We have $L(a-1) = L(a)$ and hence
$M(a-1) = M(a)$.
Thus the two quotients $L(a-1)/M(a-1)$ and $L(a)/M(a)$
are equal.

Next assume that
$a$ is the prime power $p^{\alpha}$.
Then $L(a)$ increases by a factor of $p$,
that is,
$L(a) = p \cdot L(a-1)$.
If $M(a-1) = M(a)$ 
then $L(a)/M(a) = p \cdot L(a-1)/M(a-1) \geq L(a-1)/M(a-1)$. 
On the other hand, if $M(a) \neq M(a-1)$ 
then we have that $M(a) = p^{\alpha-1}$.
Note now that $M(a-1) \geq p^{\alpha-2}$.
We thus have
\begin{align*}
     \frac{L(a-1)}{M(a-1)}
& =
     \frac{p \cdot L(a-1)}{p \cdot M(a-1)}
     \leq
     \frac{L(a)}{p \cdot p^{\alpha-2}}
=
     \frac{L(a)}{M(a)} . 
\qedhere
\end{align*}
\end{proof}

Using Lemma~\ref{lemma_L_M},  we now remove
the lower bound $a \cdot k$ on $n$.
\begin{theorem}
For an integer $a> 1$, let $L(a)$ and $M(a)$ be as in
Proposition~\ref{proposition_M}.
Let $k$ be an integer greater than or equal to
the quotient $L(a)/M(a)$,
and let $n$ be a positive integer 
bounded by  $n \leq (a+1) \cdot k$.
Then the van der Waerden complex $\vdW(n,k)$
is contractible.
\label{theorem_main_bound}
\end{theorem}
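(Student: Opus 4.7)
The plan is to derive the theorem from Proposition~\ref{proposition_M} by lowering the parameter $a$ to a suitable $a' \leq a$, using Lemma~\ref{lemma_L_M} as the key tool. Given a positive integer $n \leq (a+1)k$, let $a'$ be the unique integer satisfying $a' \cdot k < n \leq (a'+1) \cdot k$ (we can assume $n \geq k+1$, for otherwise $\vdW(n,k)$ contains no facets and the assertion reduces to an edge case); the inequality $n \leq (a+1)k$ forces $a' \leq a$.

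If $a' \geq 2$, then by Lemma~\ref{lemma_L_M} the quotient $L/M$ is weakly increasing, so
$$  \frac{L(a')}{M(a')} \leq \frac{L(a)}{M(a)} \leq k . $$
Consequently, Proposition~\ref{proposition_M} applied with $a'$ in place of $a$ immediately yields the contractibility of $\vdW(n,k)$.

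If $a' = 1$, so that $k < n \leq 2k$, Proposition~\ref{proposition_M} does not directly apply because it presumes $a > 1$; I would handle this case by rerunning the Morse-matching argument of Proposition~\ref{proposition_L} directly. In this range every facet of $\vdW(n,k)$ has step size $1$, so for each pair $(x,y)\in Q$ with $\{x,y\}$ an edge and $y-x \geq 2$, the set of minimal facet-step sizes reduces to $T = \{1\}$, giving $\lcm(T) = 1$. The essential inequality $\lcm(T) < y-x$ is then immediate from $y-x \geq 2$, and the matching $F \leftrightarrow F \triangle \{x+1\}$ is well defined and acyclic on each fiber $\varphi^{-1}((x,y))$. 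Combined with the matching $\{i\} \leftrightarrow \{i,i+1\}$ on the fiber over $\hz$, the unique critical cell is $\{n\}$, so $\vdW(n,k)$ is contractible.

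The main obstacle is the boundary case $a' = 1$: the extensive divisibility case-split in the proof of Proposition~\ref{proposition_L} was needed to rule out $\lcm(T) = y - x$ when $\lcm(T)$ could potentially be as large as $L(a)$, and one must check that this entire chain of cases collapses into the trivial statement $1 < y - x$ once the operative parameter drops to $a'=1$. Everything else in the argument is a direct consequence of the monotonicity of $L/M$.
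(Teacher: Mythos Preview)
Your approach is essentially identical to the paper's: pick the unique integer $b$ (your $a'$) with $bk < n \leq (b+1)k$, use Lemma~\ref{lemma_L_M} to obtain $L(b)/M(b) \leq L(a)/M(a) \leq k$, and then apply Proposition~\ref{proposition_M} with parameter~$b$. The paper's proof is in fact terser and does not separate out the boundary cases $n \leq k$ or $a'=1$, so your explicit treatment of those is, if anything, more careful than the original.
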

\begin{proof}
Let $b \leq a$ be a positive integer
such that
$b \cdot k < n \leq (b+1) \cdot k$.
By Lemma~\ref{lemma_L_M}
we have that
$L(b)/M(b) \leq L(a)/M(a) \leq k$.
Hence applying 
Proposition~\ref{proposition_M}
with the parameter~$b$ implies that
$\vdW(n,k)$ is contractible.
\end{proof}

\begin{remark}
{\rm
If $a+1$ is not a prime power, then
the statement of
Theorem~\ref{theorem_main_bound}
can be sharpened by increasing $a$ by $1$, because we have that
$L(a) = \lcm(1,2,\ldots,a) = \lcm(1,2,\ldots,a+1)$.
Hence in this case we have that  
the complex $\vdW(n,k)$ is contractible for
$k \geq L(a)/M(a)$ and $n \leq (a+2)k$.
}
\end{remark}

We have the following lower and upper 
bounds on $M(a)$.
\begin{lemma}
For an integer $a >1$, let $M(a)$ be defined as in
Proposition~\ref{proposition_M}. Then we have that
$$
     {a}/{4} < M(a) \leq {a}/{2}.
$$
\end{lemma}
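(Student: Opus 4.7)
The plan is to prove both bounds directly from the defining relation for each exponent, namely that $\alpha_{i}$ is the largest integer with $p_{i}^{\alpha_{i}} \leq a$, equivalently
\[
p_{i}^{\alpha_{i}} \leq a < p_{i}^{\alpha_{i}+1} .
\]

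For the upper bound $M(a) \leq a/2$, I would observe that for each prime $p_{i}$ appearing in the factorization of $L(a)$,
\[
p_{i}^{\alpha_{i}-1} = \frac{p_{i}^{\alpha_{i}}}{p_{i}} \leq \frac{a}{p_{i}} \leq \frac{a}{2},
\]
since every prime is at least $2$. Taking the maximum over $i$ yields $M(a) \leq a/2$.

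For the lower bound $M(a) > a/4$, I would focus on the prime $p = 2$, which always divides $L(a)$ since $a > 1$. Let $\alpha$ be the exponent of $2$ in $L(a)$, so that $2^{\alpha} \leq a < 2^{\alpha+1}$. Then
\[
2^{\alpha-1} > \frac{a}{4},
\]
and since $2^{\alpha-1}$ belongs to the set whose maximum defines $M(a)$, we obtain $M(a) \geq 2^{\alpha-1} > a/4$.

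There is no real obstacle here; both bounds reduce to the elementary inequality $p^{\alpha_{i}} \leq a < p^{\alpha_{i}+1}$, once applied with the general prime $p_{i}$ and once specialized to $p=2$.
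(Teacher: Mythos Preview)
Your proof is correct and essentially identical to the paper's: both use $p_i^{\alpha_i}\leq a$ (with the maximizing prime) for the upper bound and specialize to $p=2$ with $2^{\alpha}\leq a<2^{\alpha+1}$ for the lower bound.
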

\begin{proof}
Assume that
$M(a) = p^{\alpha - 1}$
where $p$ is a prime and $\alpha > 0$.
We have that
$p^{\alpha} \leq a$
and hence
$M(a) = p^{\alpha-1} \leq  {p^{\alpha}}/{2} \leq {a}/{2}$.
As for the lower bound,
let $\beta \geq 0$ satisfy
 $2^{\beta} \leq a < 2^{\beta+1}$.
Then we have
${a}/{4} < 2^{\beta - 1} \leq M(a)$. 
\end{proof}

\section{Examples}
\label{section_examples}

For the sake of illustration, 
in this section
we discuss
the topology of the family of van der Waerden complexes 
$\vdW(5k,k)$ for $k \geq 1$.

Let $L(a)$ and $M(a)$ be defined as in Proposition~\ref{proposition_M}. 
Then $L(4) = 12 = 2^2 \cdot 3 $ and accordingly $M(4) = 2$. 
Applying Theorem~\ref{theorem_main_bound} for $a=4$ 
we conclude the complex $\vdW(5k,k)$ 
is contractible for all $k \geq L(4)/M(4) = 6$.
In fact, when $k \geq 6$
the van der Waerden complex  $\vdW(n,k)$ 
is contractible for all positive integers $n \leq 5k$.

The remaining van der Waerden complexes in the family,
namely 
$\vdW(5,1)$
through
$\vdW(25,5)$,  are not contractible. 
For instance,
the complex $\vdW(5,1)$ is the complete graph $K_5$ 
which is homotopy equivalent to a wedge of $\binom{4}{2} = 6$ circles. 
To determine the homotopy equivalence of
the other four van der Waerden complexes, we construct 
on each complex an acyclic matching with a minimal number of critical cells.

\begin{example}
{\rm
The complex $\vdW(10,2)$ has a discrete
Morse matching with the
critical cells~$\{10\}$ and
$\{x,x+3\}$
for $1 \leq x \leq 7$.
Thus this complex is
homotopy equivalent to
a wedge of seven circles.
}
\label{example_10_2}
\end{example}
\begin{proof}
On the face poset of 
$\vdW(10,2) - \{\emptyset\}$,
match the singleton $\{x\}$
with $\{x,x+1\}$
for $1 \leq x \leq 9$.
Furthermore, match
$\{x,x+2d\}$ with
$\{x,x+d,x+2d\}$
for $1 \leq x < x+2d \leq 10$.
It is straightforward
to see that this matching is
acyclic and the critical
cells have the form $\{x,x+3\}$
for $1 \leq x \leq 7$.
The homotopy result follows from
Proposition~\ref{proposition_wedge_of_spheres}.
\end{proof}

Let $Q$ 
and
$\varphi : \vdW(n,k) - \{\emptyset\} \longrightarrow Q$
be the poset and the order-preserving map
which appear in the beginning of the proof of
Proposition~\ref{proposition_L}.
We will use them in
Examples~\ref{example_15_3}
through~\ref{example_25_5}.

\begin{example}
{\rm
The complex $\vdW(15,3)$ has a discrete
Morse matching with the
critical cells
$\{15\}$ and
$\{x, x+3, x+6\}$
for $1 \leq x \leq 9$.
Hence this complex is
homotopy equivalent to
a wedge of nine
$2$-dimensional spheres.
}
\label{example_15_3}
\end{example}
\begin{proof}
For a non-empty face
$F \in \vdW(15,3)$,
the difference 
$\max(F) - \min(F)$ takes
one of the following values:
$0$, $1$, $2$, $3$, $4$, $6$, 
$8$, $9$ and $12$.
Consider the following
matching.
\begin{enumerate}
\item
If the difference
$\max(F)-\min(F) \in \{0, 1, 2, 4, 8\}$,
we match the face $F$ in the same manner as
in Example~\ref{example_10_2}.
This leaves $\{15\}$
as a critical cell.

\item
If $1 \leq x < x+3d \leq n$, we match the face $F$ in the
fiber $\varphi^{-1}((x,x+3d))$
with the symmetric difference
$F \triangle \{x+d\}$.
Note that when
the difference is $6$,
this leaves the critical cells
$\{x,x+3,x+6\}$
for
$1 \leq x \leq 9 = n-6$.
\end{enumerate}
Observe that all of these matchings
stay inside the corresponding
fiber of
the poset map $\varphi$
and are acyclic in each fiber.
Hence
by Theorem~\ref{theorem_patchwork}
their union is an acyclic matching.
The critical cells are~$\{15\}$ and
$\{x, x+3, x+6\}$
for $1 \leq x \leq 9$.
It follows from Proposition~\ref{proposition_wedge_of_spheres}
that $\vdW(15,3)$ is homotopy equivalent to a wedge of nine spheres.
\end{proof}

\begin{table}[t]
$$
\begin{array}{ c | c c c}
\vdW(5k,k) & \text{homotopy type}
\\ \hline
\vphantom{\rule{0 mm}{5 mm}}
\vdW(5,1) 
\vphantom{\rule{0 mm}{5 mm}}
& (S^{1})^{\vee 6}
\\
\vdW(10,2) & (S^{1})^{\vee 7}
\\
\vdW(15,3) & (S^{2})^{\vee 9}
\\
\vdW(20,4) & (S^{2})^{\vee 22} 
\\
\vdW(25,5) & (S^{2})^{\vee 32}
\end{array}
$$
\caption{The homotopy types
for~$\vdW(5k,k)$
such that $1 \leq k \leq 5$.
By Theorem~\ref{theorem_main_bound}
the complex $\vdW(5k,k)$
is contractible for $k \geq 6$.}
\label{table_5k_k}
\end{table}

\begin{example}
{\rm
The complex
$\vdW(20,4)$ has a discrete
Morse matching with the
critical cells~$\{20\}$,
$\{x, x+3, x+6\}$
for $1 \leq x \leq 14$
and
$\{x,x+4,x+12\}$
for $1 \leq x \leq 8$.
Hence this complex is
homotopy equivalent to
a wedge of $22$
$2$-dimensional spheres.
}
\label{example_20_4}
\end{example}
\begin{proof}
Now we have that
for a non-empty face $F \in \vdW(20,4)$, the difference
$\max(F)-\min(F) \in \{0, 1, 2, 3, 4, 6, 8, 9, 12, 16\}$.
\begin{enumerate}
\item
When 
$\max(F)-\min(F) \in \{0, 1, 2, 3, 6, 9\}$,
we match in the same manner as 
in Example~\ref{example_15_3}.
above. This leaves 
one zero-dimensional
critical cell $\{20\}$ and
$14$ two-dimensional critical cells 
of the form
$\{x, x+3, x+6\}$
for $1 \leq x \leq 14$.

\item
For $1 \leq x \leq 16$, 
match each face $F$
in the fiber
$\varphi^{-1}((x, x+4))$
with the symmetric difference
$F \triangle \{x + 2\}$.

\item
For $1 \leq x \leq 12$,
match each face $F$
in the fiber
$\varphi^{-1}((x, x+8))$
with the symmetric difference
$F \triangle \{x + 4\}$.

\item
For $1 \leq x \leq 8$
and
$F \in \varphi^{-1}((x,x+12))$,
there are three cases to
consider.
If $F \triangle \{x+6\}
\in \vdW(20,4)$,
then we match $F$ with
the symmetric difference
$F \triangle \{x+6\}$.
The remaining cases are when
$F$ is one of the following
three faces:
$\{x, x+4, x+12\}$,
$\{x, x+8, x+12\}$ or
$\{x, x+4, x+8, x+12\}$. We proceed by
matching the last two faces
and leaving
$\{x, x+4, x+12\}$
as a critical face, for $1 \leq x \leq 8$. 

\item
For $1 \leq x \leq 4$, we
match each face $F \in \varphi^{-1}((x, x+16))$
with the symmetric difference
$F \triangle \{x + 8\}$.
\end{enumerate}
The result follows by the same
reasoning as the end of
Example~\ref{example_15_3}.
\end{proof}

\begin{example}
{\rm
The complex
$\vdW(25,5)$ has a discrete
Morse matching with the
critical cells~$\{25\}$,
$\{x, x+3, x+6\}$
for $1 \leq x \leq 19$
and
$\{x,x+4,x+12\}$
for $1 \leq x \leq 13$.
Hence this complex is
homotopy equivalent to
a wedge of $32$
$2$-dimensional spheres.
}
\label{example_25_5}
\end{example}
\begin{proof}
Note that for a non-empty face $F$
we have that the difference
$\max(F) - \min(F)$ belongs to the set
$\{0, 1, 2, 3, 4, 5, 6, 8, 9, 10, 11, 12, 15, 16, 20\}$.
Consider the following matching:
\begin{enumerate}
\item
When the difference
$\max(F)-\min(F) \in \{0,1,2,3,4,6,8,9,12,16\}$,
we match
as in Example~\ref{example_20_4}.
This leaves the
critical cells
$\{25\}$,
$\{x, x+3, x+6\}$
for $1 \leq x \leq 19$
and
$\{x,x+4,x+12\}$
for $1 \leq x \leq 13$.

\item
For $1 \leq x < x+5d \leq 25$
(which implies
$1 \leq d \leq 4$), 
match the face $F \in \varphi^{-1}((x,x+5d))$
with the symmetric difference
$F \triangle \{x+d\}$.
\end{enumerate}
The same reasoning as
in the previous examples 
yields the result.
\end{proof}

Note that since $3$ and $5$ are
prime, it was easier to describe
the matchings for
the complexes
$\vdW(15,3)$
and
$\vdW(25,5)$.

\section{Concluding remarks}

Some natural questions arise concerning
the topology of  the van der Waerden complex.
If $\vdW(n,k)$ is noncontractible, is it
always homotopy equivalent to a wedge of spheres?
There is some evidence for this in the examples
described in Section~\ref{section_examples}.
One possible way to prove such a result would be
to construct an acyclic matching that satisfies 
Kozlov's alternating-path condition~\cite{Kozlov}.

When the complex $\vdW(n,k)$
is noncontractible, can one predict its Betti numbers 
or say something nontrivial about their behavior 
with respect to $n$ or $k$? Moreover, do sequences of these 
Betti numbers have an 
underlying connection to other mathematical structures, including
number-theoretic ones?

What is the error term in the asymptotic bound 
$\log k / \log \log k$ for the dimension of the homotopy
type in Theorem~\ref{asymptotic}? 
Given that the Prime Number Theorem is fundamentally 
encoded into this asymptotic via the primorial function, 
it is conceivable that a successful analysis 
in this direction may yield a statement about the 
homotopy dimension bound of $\vdW(n,k)$ 
that is equivalent to the Riemann Hypothesis.

Are Theorems~\ref{theorem_main}
and~\ref{theorem_main_bound} provably
the tightest possible general bounds for the dimension (with respect
to $k$) and for contractibility (with respect to $n/k$).
Can one generalize Theorem~\ref{theorem_main_bound}
to a bound on the
dimension of the homotopy
type of $\vdW(n,k)$ with respect to both $n$ and~$k$, as opposed
to just~$k$, as in Theorem~\ref{theorem_main}?

\section*{Acknowledgments}

The authors thank Nigel Pitt for discussions related
to asymptotics in Section~\ref{section_upper_bound}.
The authors also thank the referee for providing 
the references~\cite{Bjorner,Musiker_Reiner,Pakianathan_Winfree}.
The first author was partially supported by
National Security Agency grant~H98230-13-1-0280.
This work was partially supported by a grant from
the Simons Foundation (\#206001 to Margaret~Readdy).
The first and fourth authors thank 
the Princeton University Mathematics Department
where this work was initiated.

\newcommand{\journal}[6]{{\sc #1,} #2, {\it #3} {\bf #4} (#5), #6.}
\newcommand{\book}[5]{{\sc #1,} #2, #3, #4, #5.}
\newcommand{\preprint}[5]{{\sc #1,} #2, #3, #4, #5.}
\newcommand{\arxiv}[3]{{\sc #1,} #2, #3.}
\newcommand{\thesis}[4]{{\sc #1,} ``#2,'' Doctoral dissertation, #3,~#4.}

\bigskip

\noindent
{\em R.\ Ehrenborg, M.\  Readdy,
Department of Mathematics,
University of Kentucky,
Lexington, KY 40506,}
{\tt richard.ehrenborg@uky.edu},
{\tt margaret.readdy@uky.edu},

\noindent
{\em L.\ Govindaiah, P.\ S.\ Park,
Department of Mathematics,
Princeton University,
Princeton, NJ 08540,}
{\tt likithg@princeton.edu},
{\tt pspark@math.princeton.edu}.


\begin{thebibliography}{99}

\bibitem{Bjorner}
\journal{A.\ Bj\"orner}
            {A cell complex in number theory}
            {Adv.\ in Appl.\ Math.}
            {46}{2011}{71--85}

\bibitem{Forman_2}
\journal{R.\ Forman}
        {A user's guide to discrete Morse theory}
        {S\'em.\ Lothar.\ Combin.}
        {48}{2002}{Article B48c}


\bibitem{Gowers}
\journal{T.\ Gowers}
        {A new proof of Szemer\'edi's theorem}
        {Geom.\ Funct.\ Anal.}
        {11}{2001}{465--588}


\bibitem{Hardy_Wright}
{\sc G.\ H.\ Hardy and E.\ M.\ Wright,}
An Introduction to the Theory of Numbers,
Sixth edition. Revised by D.\ R.\ Heath-Brown and J.\ H.\ Silverman.
With a foreword by Andrew Wiles.
Oxford University Press, Oxford, 2008.



\bibitem{Hatcher}
\book{A.\ Hatcher}
     {Algebraic Topology}
     {Cambridge University Press}
     {Cambridge}
     {2002}


\bibitem{Kozlov_book}
\book{D.\ N.\ Kozlov}
     {Combinatorial Algebraic Topology}
     {Springer-Verlag}
     {New York}
     {2008}

\bibitem{Kozlov}
\journal{D.\ N.\ Kozlov}
            {Discrete Morse theory and Hopf bundles}
            {Pacific J.\ Math.}
            {249}{2011}{371--376}




\bibitem{Musiker_Reiner}
\journal{G.\ Musiker and V.\ Reiner}
        {The cyclotomic polynomial topologically}
        {J.\ Reine Angew.\ Math}
        {687}{2014}{113--132}





\bibitem{Pakianathan_Winfree}
\journal{J.\ Pakianathan and T.\ Winfree}
            {Threshold complexes and connections to number theory}
            {Turkish J.\ Math.}
            {37}{2013}{511--539} 

\bibitem{Stanley_EC_I}
\book{R.\ P.\ Stanley}
     {Enumerative Combinatorics, Vol. I, 2nd edition}
     {Cambridge Studies in Advanced Mathematics (No.\ 49), 
     Cambridge University Press}{Cambridge}
     {2012}

\bibitem{van_der_Waerden}
\journal{B.\ L.\ van der Waerden}
            {Beweis einer Baudetschen Vermutung}
            {Nieuw.\ Arch.\ Wisk.}
            {15}{1927}{212--216}

\end{thebibliography}
\end{document}